\numberwithin{equation}{section}
\newtheorem{theorem}{Theorem}[section]
\newtheorem{lemma}[theorem]{Lemma}
\newtheorem{proposition}[theorem]{Proposition}
\theoremstyle{definition}
\newtheorem{definition}[theorem]{Definition}
\theoremstyle{remark}
\begin{document}

\title{A note on the bilinear fractional integral operator acting on Morrey spaces}
\author{Naoya Hatano and Yoshihiro Sawano}
\address[Naoya Hatano]{Department of Mathematics, Chuo University, 1-13-27, Kasuga, Bunkyo-ku, Tokyo 112-8551, Japan}
\address[Yoshihiro Sawano]{Department of Mathematical Science, Tokyo Metropolitan University, 1-1 Minami-Ohsawa, Hachioji, Tokyo, 192-0397, Japan}
\email[Naoya Hatano]{n18012@gug.math.chuo-u.ac.jp}
\email[Yoshihiro Sawano]{ysawano@tmu.ac.jp}
\maketitle

\begin{abstract}
The boundedness of the bilinear fractional integral operator
is investigated.
This bilinear fractional integral operator goes back
to Kenig and Stein.
This paper is oriented to the boundedness
of this operator on products of Morrey spaces.
Compared to the earlier work by He and Yan,
the local integrability condition of the domain
is expanded.
The local integrability condition
can be relaxed with the help of the averaging technique.
\end{abstract}

{\bf Keywords}
Morrey spaces,
bilinear fractional integral operators.

{\bf Mathematics Subject Classifications (2010)} 
Primary 42B35; Secondary 42B25

\section{Introduction}

Let $1<q \le p<\infty$.
Define the {\it Morrey norm}
$\|\cdot\|_{{\mathcal M}^p_q}$ by
\[
\| f \|_{{\mathcal M}^p_q}
\equiv
\sup\left\{
|Q|^{\frac{1}{p}-\frac{1}{q}}\|f\|_{L^q(Q)}
\,:\,\mbox{ $Q$ is a dyadic cube in ${\mathbb R}^n$}\right\}
\]
for a measurable function $f$.
We recall the definition of the dyadic cubes
precisely in Section \ref{s2}.
Here let us content ourselves
with the intuitive understanding that
$p$ serves as the global integrability,
as is hinted by the dilation mapping
$f \mapsto f(t\cdot)$,
and that
$q$ serves as the local integrability.
The {\it Morrey space}
${\mathcal M}^p_q({\mathbb R}^n)$
is the set of all the measurable functions $f$
for which
$\| f \|_{{\mathcal M}^p_q}$
is finite.
A simple geometric observation shows that
\[
\| f \|_{{\mathcal M}^p_q}
\sim
\sup\left\{
|Q|^{\frac{1}{p}-\frac{1}{q}}\|f\|_{L^q(Q)}
\,:\,\mbox{ $Q$ is a cube in ${\mathbb R}^n$}\right\}
\]
for any measurable function $f$.
We handle the following bilinear operator
defined in \cite{Grafakos92, KeSt99}.
\begin{definition}\label{defi:150824-27a}
\index{bilinear fractional integral operator of Grafakos type@bilinear fractional integral operator of Grafakos type}
\index{${\mathcal I}_{\alpha}$}
The {\it bilinear fractional integral operator of Grafakos type}
${\mathcal J}_{\alpha}$, $0<\alpha<n$ is given by
$$
{\mathcal J}_{\alpha}[f_1,f_2](x)
\equiv
\int_{{\mathbb R}^n}
\frac{f_1(x+y)f_2(x-y)}{|y|^{n-\alpha}}{\rm d}y
\quad (x \in {\mathbb R}^n),
$$
where
$f_1,f_2$
are non-negative integrable functions
defined in ${\mathbb R}^n$.
\end{definition}
The operator 
${\mathcal I}_{\alpha}[f_1,f_2]$,
 $0<\alpha<2n$,
defined by
\[
{\mathcal I}_{\alpha}[f_1,f_2](x)
\equiv
\int_{{\mathbb R}^n \times {\mathbb R}^n}
\frac{f_1(y_1)f_2(y_2)}{(|x-y_1|+|x-y_2|)^{2n-\alpha}}{\rm d}y
\quad (x \in {\mathbb R}^n)
\]
for non-negative integrable functions
$f_1$ and $f_2$
defined in ${\mathbb R}^n$,
is a contrast to 
${\mathcal J}_{\alpha}[f_1,f_2]$.
These two operators
with $0<\alpha<n$
pass the fractional integral operator
$I_\alpha$ to the bilinear case,
where
$I_\alpha$ is the fractional maximal operator
\[
I_\alpha f(x)
\equiv
\int_{{\mathbb R}^n}\frac{f(y)}{|x-y|^{n-\alpha}}{\rm d}y
\quad (x \in {\mathbb R}^n)
\]
for a nonnegative measurable function $f:{\mathbb R}^n \to [0,\infty]$.

Here and below
we assume that the functions are non-negative
to ignore the issue of the convergence of the integral
definining
${\mathcal J}_{\alpha}[f_1,f_2](x)$.

The operator
${\mathcal I}_{\alpha}[f_1,f_2]$
acting on Morrey spaces
is investigated by many authors
in many settings such as
the generalized Morrey spaces \cite{BuLi11},
the weighted setting \cite{GuOm15,Iida17}.
the case equipped with the rough kernel \cite{Iida14-2,SSTS16}
and
the non-doubling setting \cite{ISST12,TaZh11-1}.
See also
\cite{DiMe15,XiLu18} for the case of commutators
generated by ${\mathcal I}_\alpha$ and other functions.
However
we do not so much about
the action of
the operator
${\mathcal J}_{\alpha}$
on Morrey spaces.
The works \cite{FaGa12,HeYa19}  considered the boundedness property of
${\mathcal J}_{\alpha}$.
We aim here to prove the following estimate:
\begin{theorem}\label{thm:190204-18}
Let
\[
0<\alpha<n,\quad
1<q_1 \le p_1<\infty,\quad
1<q_2 \le p_2<\infty,\quad
1\le t \le s<\infty.
\]
Define $p$ and $q$ by
\[
\frac{1}{p}=\frac{1}{p_1}+\frac{1}{p_2}, \quad
\frac{1}{q}=\frac{1}{q_1}+\frac{1}{q_2}, 
\]
Assume that 
\[
\frac{1}{s}=\frac{1}{p}-\frac{\alpha}{n}, \quad
\frac{q}{p}=\frac{t}{s}, \quad
s<\min(q_1,q_2).
\]
Then
for all
$f_1 \in {\mathcal M}^{p_1}_{q_1}({\mathbb R}^n)$
and
$f_2 \in {\mathcal M}^{p_2}_{q_2}({\mathbb R}^n)$,
\[
\|{\mathcal J}_\alpha[f_1,f_2]\|_{{\mathcal M}^s_t}
\lesssim
\|f_1\|_{{\mathcal M}^{p_1}_{q_1}}
\|f_2\|_{{\mathcal M}^{p_2}_{q_2}}.
\]
\end{theorem}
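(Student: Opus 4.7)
My plan is to combine an Adams--type pointwise bound with the Chiarenza--Frasca theorem on the Hardy--Littlewood maximal operator $M$, via the bilinear maximal majorant
\[
\mathcal{M}^b[f_1,f_2](x):=\sup_{\rho>0}\rho^{-n}\int_{|y|<\rho} f_1(x+y)f_2(x-y)\,{\rm d}y.
\]

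First, I would derive the pointwise bound
\[
\mathcal{J}_\alpha[f_1,f_2](x) \lesssim \mathcal{M}^b[f_1,f_2](x)^{p/s}\bigl(\|f_1\|_{\mathcal{M}^{p_1}_{q_1}}\|f_2\|_{\mathcal{M}^{p_2}_{q_2}}\bigr)^{1-p/s}.
\]
Decomposing $\mathcal{J}_\alpha[f_1,f_2](x)$ dyadically in $|y|$ and splitting at a free threshold $r>0$, the inner annuli $|y|<r$ together contribute at most a constant multiple of $r^{\alpha}\mathcal{M}^b[f_1,f_2](x)$. For the outer annuli $|y|\ge r$, H\"older's inequality with exponents $q_1,q_2,q'=q/(q-1)$ together with the local Morrey bound $\|f_j\|_{L^{q_j}(B(x,R))}\lesssim R^{n(1/q_j-1/p_j)}\|f_j\|_{\mathcal{M}^{p_j}_{q_j}}$ produces a geometric series in $2^k$ of ratio $2^{\alpha-n/p}<1$, summing to $r^{\alpha-n/p}\|f_1\|_{\mathcal{M}^{p_1}_{q_1}}\|f_2\|_{\mathcal{M}^{p_2}_{q_2}}$. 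Optimizing in $r$ and using $1/s=1/p-\alpha/n$ yields the displayed bound.

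Next I would take the $\mathcal{M}^s_t$-norm of both sides. The power rule $\|g^a\|_{\mathcal{M}^s_t}=\|g\|_{\mathcal{M}^{sa}_{ta}}^a$ with $a=p/s$, combined with the hypothesis $t/s=q/p$ (so that $sa=p$ and $ta=q$), reduces the theorem to showing
\[
\|\mathcal{M}^b[f_1,f_2]\|_{\mathcal{M}^p_q}\lesssim \|f_1\|_{\mathcal{M}^{p_1}_{q_1}}\|f_2\|_{\mathcal{M}^{p_2}_{q_2}}.
\]
For this, a further H\"older inside the integral defining $\mathcal{M}^b$ gives, for any $1\le r_1,r_2$ with $1/r_1+1/r_2\le 1$, the pointwise dominance
\[
\mathcal{M}^b[f_1,f_2](x)\lesssim \bigl(M|f_1|^{r_1}(x)\bigr)^{1/r_1}\bigl(M|f_2|^{r_2}(x)\bigr)^{1/r_2}.
\]
Morrey's H\"older inequality applied to this product and the Chiarenza--Frasca theorem applied on $\mathcal{M}^{p_j/r_j}_{q_j/r_j}$ (valid whenever $r_j<q_j$) then complete the estimate.

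The main obstacle is this last parameter selection: we must pick $r_1,r_2$ with $1\le r_j<q_j$ and $1/r_1+1/r_2\le 1$, i.e., $q_2'<r_1<q_1$, which is feasible precisely because $1/q_1+1/q_2=1/q<1$. This averaging step---replacing $\mathcal{M}^b$ by a product $M_{r_1}f_1\cdot M_{r_2}f_2$ with $r_j$ strictly less than $q_j$---is the essence of the ``averaging technique'' mentioned in the abstract, and it is what permits the relaxation of the local integrability assumption compared to the He--Yan result. The hypothesis $s<\min(q_1,q_2)$ ensures the exponent identifications $a=p/s$, $sa=p$, $ta=q$ land in valid Morrey parameters throughout the reduction.
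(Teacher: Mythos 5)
Your architecture --- a Hedberg-type splitting at a threshold $r$, optimization in $r$, the power rule $\|g^{p/s}\|_{{\mathcal M}^s_t}=\|g\|_{{\mathcal M}^p_q}^{p/s}$, and then a maximal-function bound --- closely mirrors the paper's Lemma \ref{thm:180615-5}. The problem is your final step. You dominate the bilinear maximal function by $(M|f_1|^{r_1})^{1/r_1}(M|f_2|^{r_2})^{1/r_2}$, which by H\"{o}lder requires $1/r_1+1/r_2\le 1$, while Chiarenza--Frasca requires $r_j<q_j$; together these force $1/q_1+1/q_2<1$, i.e.\ $q>1$. You assert that ``$1/q_1+1/q_2=1/q<1$'' follows from the hypotheses, but it does not: take $n=3$, $\alpha=1$, $p_1=p_2=2$, $q_1=q_2=1.9$, so that $p=1$, $q=0.95<1$, $s=1.5<\min(q_1,q_2)$, $t=sq/p=1.425\in[1,s]$. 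All assumptions of the theorem hold, yet no admissible pair $(r_1,r_2)$ exists, and your concluding estimate $\|{\mathcal M}^b[f_1,f_2]\|_{{\mathcal M}^p_q}\lesssim\|f_1\|_{{\mathcal M}^{p_1}_{q_1}}\|f_2\|_{{\mathcal M}^{p_2}_{q_2}}$ is left unproven. This is not a peripheral case: splitting the entangled product $f_1(x+y)f_2(x-y)$ into two separate sublinear maximal operators via pointwise H\"{o}lder is essentially the He--Yan strategy, and the regime where it fails is exactly where the theorem is new. (The same issue appears, in repairable form, in your outer-annuli estimate using exponents $q_1,q_2,q'$.)

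What the paper does instead, and what you are missing, is a decoupling that is \emph{not} pointwise in $x$: it estimates $\bigl\|\int_{B(2^{-l})}f_1(\cdot+y)f_2(\cdot-y)\,{\rm d}y\bigr\|_{L^v(Q)}$ for $x\in Q\in{\mathcal D}_l$ by Minkowski's integral inequality and H\"{o}lder in $y$ alone; after the change of variables $(x,y)\mapsto(x+y,x-y)$ the double integral over $Q\times B(2^{-l})$ factors into $\|f_1\|_{L^v(Q(x,3\cdot2^{-l}))}\|f_2\|_{L^v(Q(x,3\cdot2^{-l}))}$ with a \emph{single} exponent $v\in(s,\min(q_1,q_2))$, with no constraint of the form $1/r_1+1/r_2\le1$. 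This local $L^v$ average is then fed into the Morrey decomposition Lemma \ref{thm:131108-2} (which needs $v>s\ge t$, whence the hypothesis $s<\min(q_1,q_2)$), reducing matters to the separated product of averages handled by Lemma \ref{thm:180615-5}. This averaging in $x$ over $Q$ is the ``averaging technique'' of the abstract; your bilinear maximal function keeps everything pointwise in $x$ and therefore cannot reach the full stated range. To repair your argument you would need to prove the bound for ${\mathcal M}^b$ on ${\mathcal M}^p_q({\mathbb R}^n)$ with $q\le1$ directly, which in effect requires reintroducing the paper's averaging step.
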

As is pointed out in \cite{HeYa19},
the assumption $\frac{q}{p}=\frac{t}{s}$
is essential.
The case $t=1$ is new
and
the case $t>1$ somehow extends the 
work \cite{HeYa19}.

Theorem \ref{thm:190204-18} partially
extends the following result
by Kenig and Stein \cite[Theorem 2]{KeSt99}:
\begin{proposition}
Let
$0<\alpha<n$
and
$1<p_1,p_2<\infty$.
Assume that
$\dfrac{1}{p_1}+\dfrac{1}{p_2}>\dfrac{\alpha}{n}$,
so that we can define $s>0$ by
$\dfrac{1}{s}=\dfrac{1}{p_1}+\dfrac{1}{p_2}-\dfrac{\alpha}{n}$.
Then
for all
$f_1 \in L^{p_1}({\mathbb R}^n)$
and
$f_2 \in L^{p_2}({\mathbb R}^n)$,
\[
\|{\mathcal J}_\alpha[f_1,f_2]\|_{L^s}
\lesssim
\|f_1\|_{L^{p_1}}
\|f_2\|_{L^{p_2}}.
\]
\end{proposition}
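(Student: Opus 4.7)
The natural plan is to establish a Hedberg-type pointwise bound for $\mathcal{J}_\alpha[f_1,f_2](x)$ and then to reduce the estimate to the boundedness of a bilinear Hardy--Littlewood maximal operator. Pick a free radius $R>0$ and split
\[
\mathcal{J}_\alpha[f_1,f_2](x)
=\int_{|y|<R}\frac{f_1(x+y)f_2(x-y)}{|y|^{n-\alpha}}\,dy
+\int_{|y|\ge R}\frac{f_1(x+y)f_2(x-y)}{|y|^{n-\alpha}}\,dy.
\]
A dyadic decomposition of the near piece into annuli $\{|y|\sim 2^k\}$, together with the geometric sum $\sum_{2^k\le R}2^{k\alpha}\lesssim R^\alpha$, gives
\[
\int_{|y|<R}\frac{f_1(x+y)f_2(x-y)}{|y|^{n-\alpha}}\,dy
\lesssim R^\alpha\,\mathcal{M}[f_1,f_2](x),
\]
where
\[
\mathcal{M}[f_1,f_2](x)
\equiv\sup_{r>0}\frac{1}{|B(0,r)|}\int_{|y|<r}f_1(x+y)f_2(x-y)\,dy
\]
is the natural bilinear maximal companion.

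For the far piece I would apply H\"older's inequality with the three factors $p_1,p_2,p'$, where $1/p'=1-1/p$, so that the kernel $|y|^{\alpha-n}\mathbf{1}_{\{|y|\ge R\}}$ lies in $L^{p'}$. The standing hypothesis $1/p>\alpha/n$ is precisely what makes the tail integral $\int_{|y|\ge R}|y|^{(\alpha-n)p'}\,dy$ finite; it evaluates to a constant times $R^{\alpha-n/p}=R^{-n/s}$, giving
\[
\int_{|y|\ge R}\frac{f_1(x+y)f_2(x-y)}{|y|^{n-\alpha}}\,dy
\lesssim R^{-n/s}\|f_1\|_{L^{p_1}}\|f_2\|_{L^{p_2}}.
\]
Optimizing the two-term sum $R^\alpha\mathcal{M}[f_1,f_2](x)+R^{-n/s}\|f_1\|_{L^{p_1}}\|f_2\|_{L^{p_2}}$ in $R$ (the minimum is attained at $R^{n/p}=\|f_1\|_{L^{p_1}}\|f_2\|_{L^{p_2}}/\mathcal{M}[f_1,f_2](x)$) yields the pointwise Hedberg-type inequality
\[
\mathcal{J}_\alpha[f_1,f_2](x)
\lesssim
\mathcal{M}[f_1,f_2](x)^{p/s}\bigl(\|f_1\|_{L^{p_1}}\|f_2\|_{L^{p_2}}\bigr)^{1-p/s}.
\]
Taking the $L^s$ norm and using $s\cdot(p/s)=p$ then reduces the whole theorem to the single bilinear maximal estimate $\|\mathcal{M}[f_1,f_2]\|_{L^p}\lesssim\|f_1\|_{L^{p_1}}\|f_2\|_{L^{p_2}}$.

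The hard part is closing this last reduction. When $p>1$, one exploits the pointwise domination $\mathcal{M}[f_1,f_2](x)\le M(f_1^a)(x)^{1/a}M(f_2^b)(x)^{1/b}$, available for any H\"older pair $1/a+1/b=1$, chooses $p_2'<a<p_1$ (which is possible precisely when $p>1$), and combines the generalized H\"older inequality (since $\frac{1}{p}=\frac{1}{p_1}+\frac{1}{p_2}$) with the classical Hardy--Littlewood maximal theorem on $L^{p_1/a}$ and $L^{p_2/b}$ to finish. The hypothesis $\frac{1}{p_1}+\frac{1}{p_2}>\frac{\alpha}{n}$, however, also admits the range $p\le 1$, in which both the three-factor H\"older in the far piece (since then $p'<1$) and the strong-type $L^p$ bound for $\mathcal{M}[\cdot,\cdot]$ break down. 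This regime must be treated separately, typically by first establishing a weak-type endpoint such as $\mathcal{J}_\alpha:L^1\times L^1\to L^{n/(2n-\alpha),\infty}$ via a Calder\'on--Zygmund-type argument and then closing the full range by multilinear Marcinkiewicz interpolation in the spirit of Grafakos--Kalton.
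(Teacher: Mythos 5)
The paper does not actually prove this Proposition; it is quoted verbatim from Kenig--Stein \cite[Theorem 2]{KeSt99} as background. So the comparison has to be partly against the paper's general toolkit: your near/far splitting at radius $R$ followed by optimization in $R$ is exactly the Hedberg-type device the authors use in the proof of Lemma \ref{thm:180615-5} (there the splitting parameter is $L$ and the optimization produces $(Mf_1\,Mf_2)^{p/s}(\|f_1\|\|f_2\|)^{1-p/s}$), and your computation of the exponents is correct: the tail integral converges precisely because $1/p>\alpha/n$, the optimal $R$ satisfies $R^{n/p}=\|f_1\|_{L^{p_1}}\|f_2\|_{L^{p_2}}/\mathcal{M}[f_1,f_2](x)$, and the reduction to $\|\mathcal{M}[f_1,f_2]\|_{L^p}\lesssim\|f_1\|_{L^{p_1}}\|f_2\|_{L^{p_2}}$ is sound. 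For $p>1$ your closing argument via $\mathcal{M}[f_1,f_2]\le M(f_1^a)^{1/a}M(f_2^b)^{1/b}$ with $p_2'<a<p_1$ is complete and correct.

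The genuine gap is the range $p\le 1$, which you name but do not handle, and which is squarely inside the hypotheses of the Proposition (e.g.\ $p_1=p_2=3/2$ gives $p=3/4$; the target exponent $s$ may then be below $1$). In that range two separate steps of your argument fail, as you note: the three-exponent H\"{o}lder inequality in the far piece needs $1/p_1+1/p_2+1/p'=1$ with $p'\ge 1$, and the strong bilinear maximal bound on $L^p$ needs an exponent $a$ with $p_2'<a<p_1$, which exists only when $p>1$ (even $p=1$ fails for the maximal part). The sentence deferring this to ``a weak-type endpoint $L^1\times L^1\to L^{n/(2n-\alpha),\infty}$ plus multilinear Marcinkiewicz interpolation'' is not a proof but a pointer to where the actual content of Kenig and Stein's theorem lives: the sub-H\"{o}lder regime $1/p_1+1/p_2>1$ is precisely the novel and difficult part of \cite[Theorem 2]{KeSt99}, and neither the weak-type endpoint nor the applicability of the interpolation (which must be confined to the region $1/p_1+1/p_2>\alpha/n$, since the corner $L^\infty\times L^\infty$ is not available) is established in your write-up. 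As it stands you have reproved the easy half of the Proposition; the half that makes it a theorem worth citing is missing.
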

In \cite{HeYa19}
He and Yan proved the boundedness
of the operator is to use the H\"{o}lder inequality
under the assumption
\begin{equation}\label{eq:190228-1}
\frac{q_1}{p_1}=\frac{q_2}{p_2}, \quad
\frac{1}{\max\left(q_1',\frac{\alpha}{n}p_1\right)}
+
\frac{1}{\max\left(q_2',\frac{\alpha}{n}p_2\right)}>1,
\end{equation}
so that
\[
\frac{p}{q}=\frac{q_1}{p_1}=\frac{q_2}{p_2}
\]
and there exists $u \in (1,\infty)$ such that
\[
\frac{\alpha}{n}p_1<u<
\left(\frac{\alpha}{n}p_2\right)',
\quad
(q_2)'<u<q_1.
\]
Define 
$s_1,s_2,t_1,t_2$ by
\[
\frac{u}{s_1}=\frac{u}{p_1}-\frac{\alpha}{n}, \quad
\frac{u'}{s_2}=\frac{u'}{p_2}-\frac{\alpha}{n}, \quad
\frac{t_1}{s_1}=\frac{q_1}{p_1}, \quad
\frac{t_2}{s_2}=\frac{q_2}{p_2},
\]
so that
$1<t_1 \le s_1<\infty$
and that
$1<t_2 \le s_2<\infty$.
Then
\[
\frac{1}{s}=\frac{1}{s_1}+\frac{1}{s_2}, \quad
\frac{1}{t}=\frac{1}{t_1}+\frac{1}{t_2},
\]
since
\[
\frac{p}{q}=\frac{q_1}{p_1}=\frac{q_2}{p_2}.
\]
Meanwhile by the H\"{o}lder inequality
we have
\begin{align*}
{\mathcal J}_\alpha[f_1,f_2](x)
\le
\left(
\int_{{\mathbb R}^n}\frac{|f_1(x+y)|^u}{|y|^{\alpha}}{\rm d}y
\right)^{\frac1u}
\left(
\int_{{\mathbb R}^n}\frac{|f_2(x-y)|^{u'}}{|y|^{\alpha}}{\rm d}y
\right)^{\frac1{u'}}
\end{align*}
for any $1<u<\infty$.
Consequently,
by the H\"{o}lder inequality once again,
we obtain
\[
\|{\mathcal J}_\alpha[f_1,f_2]\|_{{\mathcal M}^s_t}
\le
\|I_\alpha^{(u)}f_1\|_{{\mathcal M}^{s_1}_{t_1}}
\|I_\alpha^{(u')}f_2\|_{{\mathcal M}^{s_2}_{t_2}}.
\]
If we use the Adams theorem,
asserting that
$I_\alpha^{(v)}$ maps
${\mathcal M}^P_Q({\mathbb R}^n)$
to
${\mathcal M}^S_T({\mathbb R}^n)$
whenever
$v<Q \le P<\infty$,
$v<T \le S<\infty$,
$\frac{v}{S}=\frac{v}{P}-\frac{\alpha}{n}$
and
$\frac{P}{Q}=\frac{S}{T}$,
we obtain
\[
\|{\mathcal J}_\alpha[f_1,f_2]\|_{{\mathcal M}^s_t}
\lesssim
\|f_1\|_{{\mathcal M}^{p_1}_{q_1}}
\|f_2\|_{{\mathcal M}^{p_2}_{q_2}}.
\]
Thus Theorem \ref{thm:190204-18}
is significant when
(\ref{eq:190228-1}) fails.
See \cite[Theorem 2.2]{FaGa12}
for the bilinear fractional integral operator
of Kenig--Stein type equipped with the rough kernel.

The operator ${\mathcal J}_\alpha$ has a lot to do with the bilinear Hilbert transform
defined by
\[
{\mathcal H}[f_1,f_2](x)
=
\lim_{\varepsilon \downarrow 0}
\int_{{\mathbb R}\setminus(-\varepsilon,\varepsilon)}
\frac{f_1(x+y)f_2(x-y)}{y}{\rm d}y
\quad (x \in {\mathbb R}),
\]
where $f_1$ and $f_2$ are locally integrable functions.
One of the important problems
in harmonic analysis is to investigate 
the boundedness property of the bilinear Hilbert transform.
A conjecture of Calder\'{o}n in 1964 concerned
possible extensions
of ${\mathcal H}$
to a bounded bilinear operator
on products of Lebesgue spaces.
A remarkable fact is that
${\mathcal H}$
maps
$L^{p_1}({\mathbb R}) \times L^{p_2}({\mathbb R})$
to
$L^p({\mathbb R})$
boundedly 
if
$1<p_1\le\infty$,
$1<p_2\le\infty$,
$\dfrac23<p<\infty$
and
$\dfrac1p=\dfrac1{p_1}+\dfrac1{p_2}$
\cite{Lacey1,Lacey2}.
To understand the boundedness property
of this operator,
we consider its counterpart
to fractional integral operators.

\section{Preliminaries}
\label{s2}

For a measurable function $f$
defined on ${\mathbb R}^n$,
define a function $M f$ by
\begin{equation}\label{eq:maximal operator}
M f(x)\equiv
\sup_{B \in {\mathcal B}}\frac{\chi_B(x)}{|B|}\int_B |f(y)|{\rm d}y
\quad (x \in {\mathbb R}^n).
\end{equation}
The mapping $M:f \mapsto Mf$
is called the {\it Hardy--Littlewood maximal operator}.
It is known that the Hardy--Littlewood maximal operator
is bounded on ${\mathcal M}^p_q({\mathbb R}^n)$
if $1<q \le p<\infty$.
A dyadic cube is a set
of the form $Q_{j k}$
for some
$j \in {\mathbb Z}, k=(k_1,k_2,\ldots,k_n) \in {\mathbb Z}^n$.
The set of all dyadic cubes
is denoted by ${\mathcal D}${\rm;}
$\displaystyle
{\mathcal D}
\equiv
\left\{Q_{j k} \, : \, j \in {\mathbb Z}, k \in {\mathbb Z}^n \right\}.
$
For $j\in {\mathbb Z}$
the set of dyadic cubes of the $j$-th generation is given by
\[
{\mathcal D}_j={\mathcal D}_j({\mathbb R}^n)\equiv \{Q_{j k} \, : \, k \in {\mathbb Z}^n \}
=\{Q \in {\mathcal D} \, : \, \ell(Q)=2^{-j} \}.
\]

The following lemma can be located
as a standard estimate to handle
this bilinear fractional integral operator.
\begin{lemma}\label{lem:180615-41}
Let $f_1,f_2 \ge 0$ be measurable functions.
Then we have
\begin{align*}
{\mathcal J}_{\alpha}[f_1,f_2](x)
&\lesssim
\sum_{l=-\infty}^{\infty}
\sum_{Q \in {\mathcal D}_l}
2^{l(n-\alpha)}\chi_Q(x)
\int_{B(2^{-l})}
f_1(x+y)f_2(x-y){\rm d}y
\quad (x \in {\mathbb R}^n).
\end{align*}
\end{lemma}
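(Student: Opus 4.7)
The plan is to split the $y$-integral into dyadic annuli $A_l=\{y\in{\mathbb R}^n : 2^{-l-1}<|y|\le 2^{-l}\}$, on which $|y|^{n-\alpha}$ is essentially constant, and then insert a dyadic partition of unity in $x$ at the matching scale.

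First, observe that ${\mathbb R}^n\setminus\{0\}=\bigsqcup_{l\in{\mathbb Z}}A_l$ and that $|y|^{n-\alpha}\sim 2^{-l(n-\alpha)}$ for $y\in A_l$. Since $f_1,f_2\ge 0$, this gives
\[
{\mathcal J}_\alpha[f_1,f_2](x)
=\sum_{l\in{\mathbb Z}}\int_{A_l}\frac{f_1(x+y)f_2(x-y)}{|y|^{n-\alpha}}{\rm d}y
\lesssim\sum_{l\in{\mathbb Z}}2^{l(n-\alpha)}\int_{A_l}f_1(x+y)f_2(x-y){\rm d}y.
\]

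Second, since $A_l\subset B(2^{-l})$ and the integrand is non-negative, one may enlarge the domain of integration:
\[
{\mathcal J}_\alpha[f_1,f_2](x)
\lesssim
\sum_{l\in{\mathbb Z}}2^{l(n-\alpha)}\int_{B(2^{-l})}f_1(x+y)f_2(x-y){\rm d}y.
\]

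Third, for every fixed $l\in{\mathbb Z}$ the family ${\mathcal D}_l$ tiles ${\mathbb R}^n$ up to a set of measure zero, so $\sum_{Q\in{\mathcal D}_l}\chi_Q(x)=1$ for a.e.\ $x$. Inserting this identity under the sum and swapping the order of summation yields exactly the claimed inequality.

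There is no real obstacle in this argument; it is a textbook dyadic decomposition. The only points to be attentive to are the non-negativity of $f_1,f_2$ (needed to pass from the annulus $A_l$ to the ball $B(2^{-l})$) and the fact that the sign of $n-\alpha$ is positive, which guarantees $|y|^{n-\alpha}\sim 2^{-l(n-\alpha)}$ on $A_l$ with constants independent of $l$.
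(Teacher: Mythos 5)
Your proof is correct and follows essentially the same route as the paper: decomposition of the $y$-integral into dyadic annuli where $|y|^{n-\alpha}\sim 2^{-l(n-\alpha)}$, enlargement of each annulus to the ball $B(2^{-l})$ using non-negativity, and insertion of the partition $\sum_{Q\in{\mathcal D}_l}\chi_Q=1$ at each scale. No issues.
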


\begin{proof}
We will follow the idea
used in \cite[Theorem 2]{KeSt99}.
See also \cite[Theorem 3.2]{Moen16}
and \cite{Komori19-1,Komori19-2} as well.
We decompose
\begin{align*}
{\mathcal J}_{\alpha}[f_1,f_2](x)
&=
\int_{{\mathbb R}^n}
\frac{f_1(x+y)f_2(x-y)}{|y|^{n-\alpha}}{\rm d}y
\\
&=
\sum_{l=-\infty}^{\infty}
\int_{B(2^{-l}) \setminus B(2^{-l-1})}
\frac{f_1(x+y)f_2(x-y)}{|y|^{n-\alpha}}{\rm d}y\\
&\sim
\sum_{l=-\infty}^{\infty}
2^{l(n-\alpha)}
\int_{B(2^{-l}) \setminus B(2^{-l-1})}
f_1(x+y)f_2(x-y){\rm d}y\\
&\le
\sum_{l=-\infty}^{\infty}
2^{l(n-\alpha)}
\int_{B(2^{-l})}
f_1(x+y)f_2(x-y){\rm d}y.
\end{align*}
Observe that for each $l \in {\mathbb N}$
there uniquely exists a dyadic cube $Q \in {\mathcal D}_l$
such that
$x \in Q$.
Thus, we obtain the desired result.
\end{proof}

\begin{lemma}\label{thm:131108-2}
Suppose that the parameters $p,q,s,t$ satisfy
$$
1<q \le p<\infty, \quad 1<t \le s<\infty, \quad
q<t, \quad p<s
$$
or
$$
1=q \le p<\infty, \quad 1=t \le s<\infty, \quad p<s.
$$
Assume that
$\{Q_j\}_{j=1}^{\infty} \subset {\mathcal D}({\mathbb R}^n)$,
$\{a_j\}_{j=1}^{\infty} \subset {\mathcal M}^s_t({\mathbb R}^n)$
and
$\{\lambda_j\}_{j=1}^{\infty} \subset[0,\infty)$
fulfill
\begin{equation}\label{eq:thm1-1}
{\rm supp}(a_j) \subset Q_j, \quad
\left\|\sum_{j=1}^{\infty} \lambda_j \chi_{Q_j}\right\|_{{\mathcal M}^p_q}
<\infty.
\end{equation}
Then
$\displaystyle f=\sum_{j=1}^{\infty} \lambda_j a_j$
converges in
${\mathcal S}'({\mathbb R}^n) \cap L^q_{\rm loc}({\mathbb R}^n)$
and satisfies
\begin{equation}\label{eq:thm1-2}
\|f\|_{{\mathcal M}^p_q}
\lesssim_{p,q,s,t}
\left\|\sum_{j=1}^{\infty}\lambda_j\frac{\|a_j\|_{{\mathcal M}^s_t}}{|Q_j|^{\frac1s}}\chi_{Q_j}\right\|_{{\mathcal M}^p_q}.
\end{equation}
\end{lemma}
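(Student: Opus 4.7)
The plan is to compare the Morrey norm of $f$ to that of the atomic majorant
\[
G:=\sum_j \mu_j\chi_{Q_j},\qquad \mu_j:=\frac{\lambda_j\|a_j\|_{\mathcal{M}^s_t}}{|Q_j|^{1/s}},
\]
so that $\|G\|_{\mathcal{M}^p_q}$ is exactly the right-hand side of (\ref{eq:thm1-2}). Fix an arbitrary dyadic test cube $R$. By the dyadic trichotomy, every $Q_j$ meeting $R$ satisfies either $Q_j\subseteq R$ or $Q_j\supsetneq R$, so on $R$ we split $f=f_1+f_2$ with $f_1:=\sum_{Q_j\supsetneq R}\lambda_j a_j$ and $f_2:=\sum_{Q_j\subseteq R}\lambda_j a_j$, and handle the two pieces separately.

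For the large-cube piece $f_1$ I would use the triangle inequality in $L^q(R)$ together with the Morrey bound
\[
\|a_j\|_{L^q(R)}\le |R|^{1/q-1/t}\|a_j\|_{L^t(R)}\le |R|^{1/q-1/s}\|a_j\|_{\mathcal{M}^s_t},
\]
which is legitimate since $R\subseteq Q_j$ and $q\le t\le s$. Grouping terms by the common value $\tilde Q=Q_j$ and testing $\|G\|_{\mathcal{M}^p_q}$ at each $\tilde Q$ through the lower bound $G\ge\bigl(\sum_{Q_j=\tilde Q}\mu_j\bigr)\chi_{\tilde Q}$ then yields
\[
|R|^{1/p-1/q}\|f_1\|_{L^q(R)}\lesssim \sum_{k=1}^\infty 2^{-nk(1/p-1/s)}\,\|G\|_{\mathcal{M}^p_q},
\]
and the geometric series converges thanks to $p<s$.

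For the small-cube piece $f_2$ I expect the main work. When $1<q\le p$, I would pass to $L^q(R)$-duality, apply H\"{o}lder on each atom with exponents $(t,t')$, and invoke $\|a_j\|_{L^t(Q_j)}\le |Q_j|^{1/t-1/s}\|a_j\|_{\mathcal{M}^s_t}$ to arrive at
\[
\Bigl|\int_R f_2\,g\Bigr|\le \sum_j\mu_j|Q_j|^{1/t}\|g\|_{L^{t'}(Q_j)}\le \int_R G_s(x)\,M_{t'}g(x)\,dx,
\]
where $G_s$ is the small-cube portion of $G$ and $M_{t'}g:=(M|g|^{t'})^{1/t'}$; the second inequality comes from the identity $|Q_j|^{1/t}\|g\|_{L^{t'}(Q_j)}=|Q_j|\bigl(|Q_j|^{-1}\int_{Q_j}|g|^{t'}\bigr)^{1/t'}$ and the observation that this average is at most $\inf_{Q_j}M_{t'}g$. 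A final H\"{o}lder pairing $(q,q')$ together with the strong $(q'/t',q'/t')$ bound for $M$, available precisely because $q<t$, then delivers $\|f_2\|_{L^q(R)}\lesssim\|G\|_{L^q(R)}$. In the degenerate case $q=t=1$ duality is unnecessary: the triangle inequality and $\|a_j\|_{L^1(Q_j)}\le |Q_j|^{1-1/s}\|a_j\|_{\mathcal{M}^s_1}$ directly give $\|f_2\|_{L^1(R)}\le \|G_s\|_{L^1(R)}$.

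Combining the two bounds and taking the supremum over $R$ proves (\ref{eq:thm1-2}). Convergence of $f$ in $L^q_{\mathrm{loc}}$ (and hence in $\mathcal{S}'$, by the controlled polynomial growth of Morrey functions) follows by applying the same estimate to tail sums $\sum_{j>N}\lambda_j a_j$ and noting that the corresponding tails of $G$ vanish in Morrey norm under (\ref{eq:thm1-1}). I anticipate the decisive step to be the small-cube computation, and specifically the bridge from the local $L^{t'}$-average of $g$ to $M_{t'}g$, because this is the sole place where the assumption $q<t$ enters, through the Hardy--Littlewood maximal inequality.
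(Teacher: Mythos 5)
Your proposal is correct and follows essentially the same route as the paper: the large-cube part rests on the size estimate $\|a_j\|_{L^q(R)}\le|R|^{1/q-1/s}\|a_j\|_{{\mathcal M}^s_t}$ together with the geometric series coming from $p<s$, and the small-cube part on duality, H\"{o}lder with exponents $(t,t')$, the passage to $\inf_{Q_j}M^{(t')}g$, and the $L^{q'}$-boundedness of $M^{(t')}$ afforded precisely by $q<t$ (with the same simplification when $q=t=1$). The only cosmetic differences are that you run the large-cube estimate directly in $L^q(R)$ rather than through the pairing with $g$, and that you make the splitting $f=f_1+f_2$ over a fixed test cube explicit, where the paper treats the two containment cases separately.
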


\begin{proof}
This estimate is essentially obtained in \cite{IST14} if $q>1$ and \cite{GHSN16} if $q=1$.
Although we distinguished these cases in these papers,
we can combine them,
since the case of $q=1$ can almost be emerged into the case of $q>1$.

Let us suppose $q>1$ for the time being.
Let $0<\eta<\infty$.
We will use the {\it powered Hardy--Littlewood maximal operator}
$M^{(\eta)}$
defined by
\[
M^{(\eta)}f(x)
\equiv \sup_{R>0}
\left(
\frac{1}{|B(x,R)|}\int_{B(x,R)}|f(y)|^\eta{\rm d}y
\right)^\frac{1}{\eta}
\]
for a measurable function $f:{\mathbb R}^n \to {\mathbb C}$.
If $\eta=1$, then we write $M$ instead of $M^{(\eta)}$.
To prove this, we resort to the duality. For the time being, we
assume that there exists $N \in {\mathbb N}$ such that
$\lambda_j=0$ whenever $j \ge N$. Let us assume in addition that
the $a_j$ are non-negative. Fix a non-negative
function $g$ that is supported on a cube
$Q$ such that
$\|g\|_{L^{q'}} \le |Q|^{\frac1{q'}-\frac1{p'}}$.
We will show
\begin{equation}\label{eq:140812-102}
\int_{{\mathbb R}^n}f(x)g(x){\rm d}x
\lesssim_{p,q,s,t}
\left\|\sum_{j=1}^{\infty}\lambda_j\frac{\|a_j\|_{{\mathcal M}^s_t}}{|Q_j|^{\frac1s}}\chi_{Q_j}\right\|_{{\mathcal M}^p_q}.
\end{equation}
to obtain
\[
\|f\|_{{\mathcal M}^p_q}
\lesssim_{p,q,s,t}
\left\|\sum_{j=1}^{\infty}\lambda_j\frac{\|a_j\|_{{\mathcal M}^s_t}}{|Q_j|^{\frac1s}}\chi_{Q_j}\right\|_{{\mathcal M}^p_q}.
\]

Assume first that each $Q_j$ contains $Q$ as a proper subset.
If we group $j$'s such that $Q_j$ are identical,
we can assume that
each $Q_j$
is a $j$-th parent of $Q$ for each $j \in {\mathbb N}$.
Then we have
\begin{align*}
\int_{{\mathbb R}^n}f(x)g(x){\rm d}x
=
\sum_{j=1}^{\infty} \lambda_j
\int_{Q}a_j(x)g(x){\rm d}x
\le
\sum_{j=1}^{\infty} \lambda_j
\|a_j\|_{L^q(Q)}\|g\|_{L^{q'}(Q)}
\end{align*}
from $f=\sum_{j=1}^{\infty} \lambda_j a_j$.
By the size condition of $a_j$ and $g$,
we obtain
\begin{align*}
\int_{{\mathbb R}^n}f(x)g(x){\rm d}x
\le
\sum_{j=1}^{\infty} \lambda_j\|a_j\|_{{\mathcal M}^s_t}
|Q|^{\frac{1}{q}-\frac{1}{s}}|Q|^{\frac{1}{q'}-\frac{1}{p'}}
=
\sum_{j=1}^{\infty} \lambda_j\|a_j\|_{{\mathcal M}^s_t}
|Q|^{\frac{1}{p}-\frac{1}{s}}.
\end{align*}
Note that
\[
\left\|\sum_{j=1}^{\infty} \lambda_j\frac{\|a_j\|_{{\mathcal M}^s_t}}{|Q_j|^{\frac1s}} \chi_{Q_j}\right\|_{{\mathcal M}^p_q}
\ge
\frac{\|a_{j_0}\|_{{\mathcal M}^s_t}}{|Q_{j_0}|^{\frac1s}}
\left\|\lambda_{j_0} \chi_{Q_{j_0}}\right\|_{{\mathcal M}^p_q}
=\|a_{j_0}\|_{{\mathcal M}^s_t}
|Q_{j_0}|^{\frac{1}{p}-\frac{1}{s}}\lambda_{j_0}
\]
for each $j_0$.
Consequently, it follows from the condition $p<s$ that
\begin{align*}
\int_{{\mathbb R}^n}f(x)g(x){\rm d}x
\le
\sum_{j=1}^{\infty}
|Q|^{\frac{1}{p}-\frac{1}{s}}|Q_j|^{\frac{1}{s}-\frac{1}{p}}
\cdot
\left\|\sum_{k=1}^{\infty} \lambda_k\frac{\|a_k\|_{{\mathcal M}^s_t}}{|Q_k|^{\frac1s}} \chi_{Q_k}\right\|_{{\mathcal M}^p_q}
\lesssim
\left\|\sum_{j=1}^{\infty} \lambda_j\frac{\|a_j\|_{{\mathcal M}^s_t}}{|Q_j|^{\frac1s}} \chi_{Q_j}\right\|_{{\mathcal M}^p_q}.
\end{align*}
Conversely, assume that $Q$ contains each $Q_j$.
Then we have
\begin{align*}
\int_{{\mathbb R}^n}f(x)g(x){\rm d}x
=
\sum_{j=1}^{\infty} \lambda_j
\int_{Q_j}a_j(x)g(x){\rm d}x
\le
\sum_{j=1}^{\infty} \lambda_j
\|a_j\|_{L^t(Q_j)}\|g\|_{L^{t'}(Q_j)}.
\end{align*}
By the condition of $a_j$,
we obtain
\[
\int_{{\mathbb R}^n}f(x)g(x){\rm d}x
=
\sum_{j=1}^{\infty} \lambda_j
\int_{Q_j}a_j(x)g(x){\rm d}x
\le
\sum_{j=1}^{\infty} \lambda_j\|a_j\|_{{\mathcal M}^s_t}
|Q_j|^{\frac1t-\frac1s}\|g\|_{L^{t'}(Q_j)}.
\]
Thus, in terms of the Hardy--Littlewood maximal operator $M$,
we obtain
\begin{align*}
\int_{{\mathbb R}^n}f(x)g(x){\rm d}x
&\le
\sum_{j=1}^{\infty}
\lambda_j\frac{\|a_j\|_{{\mathcal M}^s_t}}{|Q_j|^{\frac1s}}  |Q_j|\times \inf_{y \in Q_j}M^{(t')}g(y)\\
&\le
\int_{{\mathbb R}^n}
\left(\sum_{j=1}^{\infty} \lambda_j\frac{\|a_j\|_{{\mathcal M}^s_t}}{|Q_j|^{\frac1s}}  \chi_{Q_j}(y)\right)
M^{(t')}g(y){\rm d}y\\
&\le
\int_{{\mathbb R}^n}
\left(\sum_{j=1}^{\infty} \lambda_j\frac{\|a_j\|_{{\mathcal M}^s_t}}{|Q_j|^{\frac1s}}  \chi_{Q_j}(y)\right)
\chi_{Q}(y)M^{(t')}g(y){\rm d}y.
\end{align*}
Hence, we obtain
(\ref{eq:140812-102})
by the H\"{o}lder inequality,
since
$\|\chi_Q M^{(t')}g\|_{L^{q'}} \lesssim
|Q|^{\frac1p-\frac1q}$.
Thus the proof for the case of $q>1$ is complete.

The case of $q=1$ is a minor modification of the above proof.
First,
if each $Q_j$ contains $Q$ as a proper subset,
the same argument as above works.
If each $Q$ contains $Q_j$,
then we can take $g=|Q|^{\frac1p-1}\chi_Q$ 
to obtain
\[
|Q|^{\frac1p-1}\int_Q f(x)g(x){\rm d}x
\lesssim_{p,s}
\left\|\sum_{j=1}^{\infty}\lambda_j\frac{\|a_j\|_{{\mathcal M}^s_1}}{|Q_j|^{\frac1s}}\chi_{Q_j}\right\|_{{\mathcal M}^p_1}.
\]
We go through the same argument
as before, where
we will replace $M^{(t')}g$ by $1$.
Since
$\|\chi_Q 1\|_{L^\infty} \lesssim
|Q|^{\frac1p-1}$,
we do not have to resort to the boundedness
of the maximal operator $M^{(t')}$ as we did
in the estimate
$\|\chi_Q M^{(t')}g\|_{L^\infty} \lesssim
|Q|^{\frac1p-1}$.
So the proof is complete in this case.
\end{proof}

\begin{lemma}\label{thm:180615-5}
Let
\[
0<\alpha<2 n, \quad
1<q_j \le p_j<\infty, \quad
0<q \le p<\infty, \quad
0<t \le s<\infty
\]
for $j=1,2$.
Assume
\[
\frac{1}{p}=\frac{1}{p_1}+\frac{1}{p_2}, \quad
\frac{1}{q}=\frac{1}{q_1}+\frac{1}{q_2}, 
\]
\[
\frac{1}{s}=\frac{1}{p}-\frac{\alpha}{n}, \quad
\frac{q}{p}=\frac{t}{s}.
\]
Then
\begin{equation}\label{eq:190213-1}
|R|^{\frac1s-\frac1t}
\left\|
\sum_{Q \in {\mathcal D}}
\frac{\chi_Q}{\ell(Q)^{2n-\alpha}}
\int_{(3Q)^2}f_1(y_1)f_2(y_2){\rm d}y_1{\rm d}y_2
\right\|_{L^t(R)}
\lesssim
\prod_{j=1}^2 \|f_j\|_{{\mathcal M}^{p_j}_{q_j}}
\end{equation}
for any cube $R$
and
for all non-negative measurable functions 
$f_1,f_2$.
\end{lemma}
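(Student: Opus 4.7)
\textbf{Proof plan for Lemma \ref{thm:180615-5}.}
The plan is to bound the function
$F(x) := \sum_{Q \in \mathcal{D}} \chi_Q(x)\,\ell(Q)^{\alpha-2n} \int_{(3Q)^2} f_1(y_1) f_2(y_2)\,{\rm d}y_1 {\rm d}y_2$
pointwise on a fixed cube $R$ by splitting the sum according to whether $\ell(Q) > \ell(R)$ or $\ell(Q) \le \ell(R)$, obtaining one uniform bound from the Morrey norms directly and another that interpolates with the Hardy--Littlewood maximal function in a Hedberg-type manner. The exponent relations in the hypotheses --- in particular $\alpha + n/s = n/p$ and $p/(qs) = 1/t$ --- will be used to balance the two bounds.

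First I record two estimates for each individual term. By H\"older's inequality and the definition of the Morrey norm,
\[
\ell(Q)^{\alpha-2n}\int_{(3Q)^2} f_1 f_2
\lesssim \ell(Q)^{\alpha-2n} |3Q|^{2-\frac{1}{p_1}-\frac{1}{p_2}} \|f_1\|_{{\mathcal M}^{p_1}_{q_1}} \|f_2\|_{{\mathcal M}^{p_2}_{q_2}}
= c\,\ell(Q)^{-n/s}\,\|f_1\|_{{\mathcal M}^{p_1}_{q_1}} \|f_2\|_{{\mathcal M}^{p_2}_{q_2}},
\]
using $\alpha - n/p_1 - n/p_2 = -n/s$. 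On the other hand, if $x \in Q$,
\[
\ell(Q)^{\alpha-2n}\int_{(3Q)^2} f_1 f_2
\le \ell(Q)^{\alpha-2n}|3Q|^{2}\,Mf_1(x)\,Mf_2(x) = c\,\ell(Q)^{\alpha}\,Mf_1(x)Mf_2(x).
\]

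For the \emph{large-scale} part, i.e.\ dyadic $Q$ with $Q\cap R\neq\emptyset$ and $\ell(Q)>\ell(R)$, I sum the first estimate; since at each scale $2^k\ell(R)$ only $O(1)$ dyadic cubes meet $R$, a geometric sum yields $F_{\mathrm{large}}(x)\lesssim \ell(R)^{-n/s}\|f_1\|\|f_2\|$, so $|R|^{1/s-1/t}\|F_{\mathrm{large}}\|_{L^t(R)}\lesssim \|f_1\|\|f_2\|$. For the \emph{small-scale} part, i.e.\ $Q$ with $\ell(Q)\le\ell(R)$ and $x\in Q$, I use the first estimate for $Q$ with $\ell(Q)>\delta$ and the second for $\ell(Q)\le\delta$; at each scale exactly one dyadic $Q$ contains $x$, so the two geometric sums yield
\[
F_{\mathrm{small}}(x)\lesssim \delta^{\alpha} Mf_1(x)Mf_2(x) + \delta^{-n/s}\|f_1\|\|f_2\|.
\]
Optimizing in $\delta>0$ and using $\alpha+n/s=n/p$ gives
\[
F_{\mathrm{small}}(x)\lesssim \bigl(Mf_1(x)Mf_2(x)\bigr)^{p/s}\bigl(\|f_1\|_{{\mathcal M}^{p_1}_{q_1}}\|f_2\|_{{\mathcal M}^{p_2}_{q_2}}\bigr)^{1-p/s}.
\]

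To finish, I take the $L^t(R)$ norm, use H\"older with $1/q_1+1/q_2=1/q$ and the identity $tp/s=q$ to get $\|Mf_1\cdot Mf_2\|_{L^q(R)}\le \|Mf_1\|_{L^{q_1}(R)}\|Mf_2\|_{L^{q_2}(R)}$, and then invoke the boundedness of $M$ on ${\mathcal M}^{p_j}_{q_j}$ together with the trivial embedding $\|g\|_{L^{q_j}(R)}\le |R|^{1/q_j-1/p_j}\|g\|_{{\mathcal M}^{p_j}_{q_j}}$. A quick check of exponents, using $p/(qs)=1/t$, shows $\|F_{\mathrm{small}}\|_{L^t(R)}\lesssim |R|^{1/t-1/s}\|f_1\|\|f_2\|$, which combined with the large-scale bound delivers (\ref{eq:190213-1}). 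The main obstacle is bookkeeping: packaging the Hedberg-type optimization cleanly and tracking the several exponent identities ($\alpha+n/s=n/p$, $p/(qs)=1/t$, $\alpha p/n=1-p/s$) so that the final power of $|R|$ matches $|R|^{1/t-1/s}$ exactly.
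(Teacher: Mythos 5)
Your proposal is correct and follows essentially the same route as the paper's proof: the same two pointwise bounds on each dyadic term (one via H\"older and the Morrey norms giving $\ell(Q)^{-n/s}$, one via the maximal functions giving $\ell(Q)^{\alpha}$), the same Hedberg-type optimization in the scale cutoff, and the same conclusion via H\"older with $tp/s=q$ and the Morrey boundedness of $M$. The only difference is cosmetic: you split off the scales $\ell(Q)>\ell(R)$ first, which is harmless but unnecessary since the pointwise optimization over all scales (as in the paper, with $L=L(x)$) already handles them.
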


See the proof of \cite[Theorem 2.2]{FaGa12}
for a similar approach.
\begin{proof}
Let $L=L(x)$ be a  positive number that is specified shortly. We decompose
\begin{align*}
&\sum_{Q \in {\mathcal D}}\frac{\chi_Q(x)}{\ell(Q)^{2n-\alpha}}\int_{(3Q)^2}f_1(y_1)f_2(y_2){\rm d}y_1{\rm d}y_2\\
&\hspace{2cm}=\sum_{Q \in {\mathcal D},l(Q)\le L}\frac{\chi_Q(x)}{\ell(Q)^{2n-\alpha}}\int_{(3Q)^2}f_1(y_1)f_2(y_2){\rm d}y_1{\rm d}y_2\\
&\hspace{2cm}\quad+\sum_{Q \in {\mathcal D},l(Q)>L}\frac{\chi_Q(x)}{\ell(Q)^{2n-\alpha}}\int_{(3Q)^2}f_1(y_1)f_2(y_2){\rm d}y_1{\rm d}y_2\\
&\hspace{2cm}=:S_1+S_2.
\end{align*}
First, we estimate the quantity $S_1$.
\[
S_1\lesssim\sum_{Q\in{\mathcal D},l(Q)\le L}\chi_Q(x)l(Q)^\alpha Mf_1(x)Mf_2(x)\sim L^\alpha Mf_1(x)Mf_2(x).
\]
Next, we estimate the quantity $S_2$. By H$\ddot{\rm o}$lder's inequality,
\begin{align*}
S_2&\lesssim\sum_{Q \in {\mathcal D},l(Q)>L}\frac{\chi_Q(x)}{\ell(Q)^{2n-\alpha}}|Q|^{\frac1{q_1'}}\|f_1\|_{L^{q_1}(3Q)}\cdot|Q|^{\frac1{q_2'}}\|f_1\|_{L^{q_2}(3Q)}\\
&\lesssim\sum_{Q \in {\mathcal D},l(Q)>L}\chi_Q(x)|Q|^{-\frac1s}\|f_1\|_{{\mathcal M}^{p_1}_{q_1}}\|f_2\|_{{\mathcal M}^{p_2}_{q_2}}\\
&\sim L^{-\frac ns}\|f_1\|_{{\mathcal M}^{p_1}_{q_1}}\|f_2\|_{{\mathcal M}^{p_2}_{q_2}}.
\end{align*}
Hence we obtain
\[
\sum_{Q \in {\mathcal D}}\frac{\chi_Q(x)}{\ell(Q)^{2n-\alpha}}\int_{(3Q)^2}f_1(y_1)f_2(y_2){\rm d}y_1{\rm d}y_2\lesssim L^\alpha Mf_1(x)Mf_2(x)+L^{-\frac ns}\|f_1\|_{{\mathcal M}^{p_1}_{q_1}}\|f_2\|_{{\mathcal M}^{p_2}_{q_2}}.
\]
In particular, choose the constant $L=L(x)$ 
to optimize the right-hand side:
\[
L=\left(\frac{\|f_1\|_{{\mathcal M}^{p_1}_{q_1}}\|f_2\|_{{\mathcal M}^{p_2}_{q_2}}}{Mf_1(x)Mf_2(x)}\right)^{\frac pn}.
\]
Then we have
\[
\sum_{Q \in {\mathcal D}}\frac{\chi_Q(x)}{\ell(Q)^{2n-\alpha}}\int_{(3Q)^2}f_1(y_1)f_2(y_2){\rm d}y_1{\rm d}y_2\lesssim(Mf_1(x)Mf_2(x))^{\frac ps}\left(\|f_1\|_{{\mathcal M}^{p_1}_{q_1}}\|f_2\|_{{\mathcal M}^{p_2}_{q_2}}\right)^{1-\frac ps}.
\]
Therefore, using H$\ddot{\rm o}$lder's inequality
for Morrey spaces,
the ${\mathcal M}^{p_1}_{q_1}({\mathbb R}^n)$-boundedness
of $M$
and
the ${\mathcal M}^{p_1}_{q_1}({\mathbb R}^n)$-boundedness
of $M$,
we have
\begin{align*}
&|R|^{\frac1s-\frac1t}\left\|\sum_{Q \in {\mathcal D}}\frac{\chi_Q}{\ell(Q)^{2n-\alpha}}\int_{(3Q)^2}f_1(y_1)f_2(y_2){\rm d}y_1{\rm d}y_2\right\|_{L^t(R)}\\
&\hspace{2cm}\lesssim\left\|(Mf_1\cdot Mf_2)^{\frac ps}\right\|_{{\mathcal M}^s_t}\left(\|f_1\|_{{\mathcal M}^{p_1}_{q_1}}\|f_2\|_{{\mathcal M}^{p_2}_{q_2}}\right)^{1-\frac ps}\\
&\hspace{2cm}=\|Mf_1\cdot Mf_2\|_{{\mathcal M}^p_q}^{\frac ps}\left(\|f_1\|_{{\mathcal M}^{p_1}_{q_1}}\|f_2\|_{{\mathcal M}^{p_2}_{q_2}}\right)^{1-\frac ps}\\
&\hspace{2cm}\le\left(\|Mf_1\|_{{\mathcal M}^{p_1}_{q_1}}\|Mf_2\|_{{\mathcal M}^{p_2}_{q_2}}\right)^{\frac ps}\left(\|f_1\|_{{\mathcal M}^{p_1}_{q_1}}\|f_2\|_{{\mathcal M}^{p_2}_{q_2}}\right)^{1-\frac ps}\\
&\hspace{2cm}\lesssim\|f_1\|_{{\mathcal M}^{p_1}_{q_1}}\|f_2\|_{{\mathcal M}^{p_2}_{q_2}}.
\end{align*}
\end{proof}

\section{Proof of Theorem \ref{thm:190204-18}}

Let $v\in(s,\min(q_1,q_2))$.
Let $x \in Q \in {\mathcal D}_l$.
By the Minkowski inequality and the H\"{o}lder inequality
\begin{align*}
\left\|\int_{B(2^{-l})}
f_1(\cdot+y)f_2(\cdot-y){\rm d}y\right\|_{L^v(Q)}
&\le
\int_{B(2^{-l})}
\left\|f_1(\cdot+y)f_2(\cdot-y)\right\|_{L^v(Q)}
{\rm d}y\\
&\le
|B(2^{-l})|^{\frac{1}{v'}}
\left(\int_{B(2^{-l})}
\left\|f_1(\cdot+y)f_2(\cdot-y)\right\|_{L^v(Q)}{}^v
{\rm d}y\right)^{\frac1v}\\
&\lesssim
|B(2^{-l})|^{\frac{1}{v'}}
\|f_1\|_{L^v(Q(x,3 \cdot 2^{-l}))}
\|f_2\|_{L^v(Q(x,3 \cdot 2^{-l}))}\\
&\lesssim
|B(2^{-l})|^{1+\frac{1}{v}}
\inf_{y_1 \in Q}M^{(v)} f_1(y_1)
\inf_{y_2 \in Q}M^{(v)} f_2(y_2).
\end{align*}
Then
thanks to Theorem \ref{thm:131108-2}
\begin{align*}
\lefteqn{
\|{\mathcal J}_{\alpha}[f_1,f_2]\|_{{\mathcal M}^s_t}
}\\
&\lesssim
\left\|
\sum_{l=-\infty}^{\infty}
\sum_{Q \in {\mathcal D}_l}2^{l(n-\alpha)}
\frac{\chi_Q}{|Q|^{\frac1v}}
\left\|\int_{B(2^{-l})}
f_1(\cdot+y)f_2(\cdot-y){\rm d}y\right\|_{L^v(Q)}
\right\|_{{\mathcal M}^s_t}\\
&\lesssim
\left\|
\sum_{l=-\infty}^{\infty}
\sum_{Q \in {\mathcal D}_l}
2^{-l\alpha}
\frac{\chi_Q}{|Q|}
\int_{3Q}M^{(v)}f_1(y_1){\rm d}y_1
\cdot
\frac{1}{|Q|}
\int_{3Q}M^{(v)}f_2(y_2){\rm d}y_2
\right\|_{{\mathcal M}^s_t}.
\end{align*}
Thus, we are again in the position of using (\ref{eq:190213-1})
to have 
\[
\|{\mathcal J}_{\alpha}[f_1,f_2]\|_{{\mathcal M}^s_t}
\lesssim
\|M^{(v)}f_1\|_{{\mathcal M}^{p_1}_{q_1}}
\|M^{(v)}f_2\|_{{\mathcal M}^{p_2}_{q_2}}.
\]
Since $v<q_1,q_2$,
we are in the position of using the boundedness
of $M$ on Morrey spaces obtained by
Chiarenza and Frasca \cite{ChFr87}.
If we use the boundedness of the
Hardy--Littlewood maximal operator,
then we obtain
\[
\|{\mathcal J}_{\alpha}[f_1,f_2]\|_{{\mathcal M}^s_t}
\lesssim
\|f_1\|_{{\mathcal M}^{p_1}_{q_1}}
\|f_2\|_{{\mathcal M}^{p_2}_{q_2}}.
\]
This is the desired result.

To conclude the paper
we remark that Fan and Gao obtained an
estimate to control
\[
\left\|\int_{B(2^{-l})}
f_1(\cdot+y)f_2(\cdot-y){\rm d}y\right\|_{L^v(Q)}
\]
in \cite[Lemma 2.1]{FaGa12}.

{\bf Acknowledgements.} 
The second-named author was supported by Grant-in-Aid for Scientific
Research (C) (16K05209), the Japan Society for the Promotion of Science.
The authors are thankful to Professor Komori--Furuya Yasuo for his
kind introcduction to \cite{Komori19-1,Komori19-2}.

\end{document}